\newtheorem{thm}{Теорема}
\newtheorem*{thA}{Теорема A}
\newtheorem*{thB}{Теорема B}
\newtheorem{proposition}{Предложение}
\newtheorem{lemma}{Лемма}
\theoremstyle{definition}
\newtheorem{remark}{Замечание}
\newtheorem{example}{Пример}
\renewcommand{\leq}{\leqslant} 
\renewcommand{\geq}{\geqslant}
\newcommand{\RR}{\mathbb{R}} 
\newcommand{\CC}{\mathbb{C}} 
\newcommand{\NN}{\mathbb{N}} 
\newcommand{\ZZ}{\mathbb{Z}}
\DeclareMathOperator{\supp}{supp} 
\DeclareMathOperator{\conv}{\mathsf{conv}}
\DeclareMathOperator{\dd}{d\!}
\DeclareMathOperator{\intr}{\mathsf{int}}
\DeclareMathOperator{\Hol}{\mathsf{Hol}}
\DeclareMathOperator{\dens}{\mathsf{dens}}
\DeclareMathOperator{\Exp}{\mathsf{Exp}}
\DeclareMathOperator{\Spf}{\mathsf{Spf}}
\DeclareMathOperator{\prm}{\mathsf{prm}}
\DeclareMathOperator{\clos}{\mathsf{clos}}
\DeclareMathOperator{\rad}{rad}
\renewcommand{\Re}{\operatorname{Re}}
\begin{document}

\title[Полнота  экспоненциальных систем \dots]{Полнота  экспоненциальных систем\\  и периметр выпуклой оболочки}

\author[Хабибуллин Булат Нурмиевич]{Б. Н. Хабибуллин}
\address{Институт математики с вычислительным центром  Уфимского федерального исследовательского центра Российской академии наук}
\email{khabib-bulat@mail.ru}

\author[Кудашева Елена Геннадьевна]{Е. Г. Кудашева}
\address{Башкирский государственный педагогический университет им. М. Акмуллы}
\email{lena\_kudasheva@mail.ru}

\author[Мурясов Роман Русланович]{Р. Р. Мурясов}
\address{Уфимский университет науки и технологий}
\email{romrumur@yandex.ru}

\thanks{Работа выполнена в рамках государственного задания Министерства науки и высшего образования 
Российской Федерации (код научной темы FMRS-2022-0124) при поддержке   Министерства просвещения Российской Федерации в рамках государственного задания (соглашение № 073-03-2023-010 от 26.01.2023).}

\keywords{полнота систем функций, экспоненциальная система, целая функция экспоненциального типа, распределение корней, периметр, выпуклая оболочка, опорная функция}

\subjclass{30B60, 30D15,	52A38, 31A05}

\UDC{517.538.2, 517.547.22,  514.17, 517.574}

\begin{abstract}
Устанавливается новая шкала условий полноты экспоненциальных систем в двух видах  функциональных пространств  на подмножествах комплексной плоскости. Первый ---  банаховы пространства  функций, непрерывных на компакте и одновременно  голоморфных во внутренности этого компакта, если она непуста, с равномерной нормой. 
Второй --- пространства голоморфных функций на ограниченном открытом множестве с топологией равномерной сходимости на компактах. Эти условия формулируются в терминах мажорирования  периметра   выпуклой оболочки области определения функций из пространства  новыми  характеристиками распределения показателей экспоненциальной системы.   
\end{abstract}

\maketitle

\tableofcontents

\section{Введение}
\subsection{Некоторые обозначения, определения и соглашения}
Одноточечные множества $\{a\}$ часто записываем без фигурных скобок, т.е. просто как $a$. 
Так, для  множества $\mathbb N:=\{1,2, \dots\}$ всех {\it натуральных чисел\/} 
 $\NN_0:=0\bigcup \NN=\{0,1, 2, \dots\}$.
\textit{Множества всех действительных чисел\/} $\RR$ с таким же отношением порядка $\leq$  
рассматриваем и  как \textit{вещественную ось\/}  в \textit{комплексной плоскости\/} $\CC$ с евклидовой  нормой-модулем $|\cdot|$. Порядковое пополнение множества $\RR$ верхней гранью $+\infty  :=\sup \RR\notin \RR$ и нижней  гранью $-\infty  :=\inf \RR\notin \RR$ даёт \textit{расширенное\/} множество действительных чисел  $\overline \RR:= \RR\bigcup \{\pm\infty\}$ с порядковой топологией. \textit{Интервалы с концами  $a\in \overline \RR$ и $b\in \overline \RR$} --- это множества 
$[a,b]:=\bigl\{x\in \overline \RR\bigm| a\leq x\leq b\bigr\}$ --- \textit{отрезок\/} в $\overline \RR$, 
$(a,b]:=[a,b]\setminus a$, $[a,b):=[a,b]\setminus b$, а  
$(a,b):=[a,b)\setminus a$ --- \textit{открытый интервал\/} в $\overline \RR$. 
Используем также  обозначения $\RR^+:=[0, +\infty)$  
 для \textit{положительной полуоси\/} и   $\overline \RR^+:=[0, +\infty]$ для её расширения.
При  $r\in \overline \RR^+$ через $D(r):=\bigl\{z'\in \CC\bigm| |z|<r\bigr\}$ и 
$\overline D(r):=\bigl\{z'\in \CC\bigm| |z|\leq r\bigr\}$, а также 
$\partial \overline D(r):=\overline D(r)\setminus  D(r)$ 
 обозначаем соответственно \textit{открытый и замкнутый круги,}  а также \textit{окружность с центром в нуле  радиуса $r$.} Для подмножества $S\subset \CC$ через $\clos S$, $\intr S$, $\partial S$ и  $\conv S$
обозначаем соответственно  \textit{замыкание,\/} \textit{внутренность,\/} \textit{границу\/}  и \textit{выпуклую оболочку\/}  множества $S$ в  $\CC$. Если граница $\partial S$ подмножества $S\subset \CC$ --- спрямляемая замкнутая кривая, то  евклидову длину этой границы  $\partial S$ обозначаем через ${\prm} (\partial S)$ --- \textit{периметр границы\/   $\partial S$.}

Всюду далее через $Z$ обозначаем \textit{распределение точек\/} на комплексной плоскости\/ $\mathbb C$, среди которых  могут быть повторяющиеся. Распределение точек $Z$ однозначно определяется функцией, действующей из $\mathbb C$ в $\overline \NN_0$ и равной в каждой точке $z\in \mathbb C$ количеству  повторений этой точки $z$  в распределение точек $Z$. Для такой функции, которую часто называют \textit{функцией кратности,\/} или \textit{дивизором,\/} распределения точек  $Z$ \cite[пп.~0.1.2--0.1.3]{Khsur}, 
сохраняем то же обозначение $Z$. Другими словами, $Z(z)$ --- это количество вхождений   точки  $z\in \CC$ в $Z$ и  
пишем $z\in Z$,  если $Z(z)>0$. 
Распределение точек  $Z$ можно трактовать  и как меру со значениями в  $\overline \NN_0$ с тем же обозначением  
\begin{equation}\label{Z}
Z(S):=\sum_{z\in S} Z(z)\in \overline \NN_0\quad\text{\it для любого  $S\subset \CC$.}
\end{equation}
Распределение точек $Z$ на   $\CC$ \textit{локально конечно,\/}  если её \textit{считающая радиальная функция} 
$Z^{\rad}(r):=Z\bigl(\overline D(r)\bigr)$ конечна, т.е.  $Z^{\rad}(r)<+\infty$
для любого   $r\in \RR^+$.

Для компакта $K$ в $\CC$  через $C(K)$ обозначаем \textit{банахово пространство непрерывных функций $f\colon K\to \CC$ с\/  $\sup$-нормой}  $\|f\|_{C(K)}:=\sup\Bigl\{\bigl|f(z)\bigr|\Bigm| z\in K \Bigr\}$.
Для открытого подмножества $O\subset \CC$ через $\Hol(O)$ обозначаем \textit{пространство голоморфных функций $f\colon O\to \CC$ с топологией равномерной сходимости на всех компактах $K\subset O$,\/} определяемой $\sup$-полунормами $\|f\|_{C(K)}$. Для компакта $K\subset \CC$ с \textit{внутренностью\/} $\intr K$ через $C(K)\bigcap \Hol(\intr K)$  обозначаем \textit{банахово пространство  непрерывных на $K$ и голоморфных на внутренности $\intr K$ функций $f\colon K\to \CC$ с $\sup$-нормой\/} $\|f\|_{C(K)}$.  Очевидно,  если $\intr K=\emptyset$ --- \textit{пустое множество,\/} то $C(K)\bigcap \Hol(\intr K)=C(K)$.

Система векторов из  топологического векторного пространства \textit{полна\/} в нём,  
если замыкание линейной оболочки этой системы совпадает с этим пространством. Для распределения точек $Z$ на $\CC$ далее рассматривается  полнота лишь  \textit{экспоненциальных систем}
 \begin{equation}\label{Exp}
\Exp^Z:=\Bigl\{ w\underset{w\in \CC}{\longmapsto}w^p\exp (zw)\Bigm| z\in Z, \; Z(z)-1\geq p\in \NN_0\Bigr\}
\end{equation}
\textit{ с распределением показателей $Z$.} \textit{Всюду далее\/} рассматриваются только  системы \eqref{Exp} с \textit{локально конечным распределением показателей} $Z$, поскольку в противном случае система $\Exp^Z$ заведомо полна в любом из рассматриваемых в этой статье функциональных пространств.  

\subsection{Предшествующие результаты}
Детальный обзор по полноте экспоненциальных систем по состоянию  до 2012 г. изложен  в монографии-обзоре \cite{Khsur} первого из авторов. Следующий давно известный результат \cite[гл. IV, \S~1]{Levin56}, \cite[комментарий после теоремы 3.3.5]{Khsur} даёт, по-видимому, самое первое условие полноты экспоненциальной  системы \eqref{Exp} 
в терминах \textit{периметра.}
\begin{thA} 
Если $S\neq \emptyset$ --- ограниченная выпуклая область в $\CC$ и 
 \begin{equation}\label{NG}
\limsup_{r\to +\infty}\frac{1}{r}
\int_{1}^r \frac{Z^{\rad}(t)}{t}\dd t
\geqslant \frac{1}{2\pi} \prm (\partial S),
\end{equation}
то экспоненциальная система $\Exp^Z$ из \eqref{Exp} полна в пространстве $\Hol (S)$.
\end{thA}
Как отмечено в \cite[п.~3.4.1]{Khsur}, если $S\neq \emptyset$ --- \textit{выпуклый компакт} в $\CC$ и нестрогое неравенство $\geqslant$ в \eqref{NG} заменить на строгое неравенство $>$, то система $\Exp^Z$  полна в пространстве 
$C(S)\bigcap \Hol (\intr S)$.

Частными проявлениями \cite[\S~7, п.~4, теорема единственности]{Kha91}, 
\cite[теорема 4.1]{Kha91_1},  \cite[теорема A]{Kha99}, \cite[теорема 3.3.5 и п.~3.4.1]{Khsur} является 
\begin{thB} Если $S\neq \emptyset$ --- ограниченная выпуклая область в $\CC$ и  выполнено хотя бы одно из следующих трёх утверждений: 
\begin{enumerate}[{\rm 1)}] 
\item\label{p1}   для некоторого $p\in [0,1)$    выполнено неравенство 
	\begin{equation}\label{p1n}
	\limsup_{r\to +\infty} \frac{1}{2r}
\int_1^r  \left({\left(\frac{r}{t} \right)}^p  +
{\left( \frac{t}{r} \right)}^p\right)
\frac{Z^{\rad} (t)}{t} \dd t 
\geq \frac1{2\pi}\frac1{1-p^2} \prm (\partial S),
\end{equation}
\item\label{p2} выполнено  неравенство
	\begin{equation}\label{p2n}
\limsup_{1<a\to +\infty} \frac1{\ln a} 
\limsup_{r\to +\infty}
 \int_r^{ar} \frac{Z^{\rad} (t)}{t^2} \dd t  \geq
\frac1{2\pi} \prm (\partial S),	
\end{equation}
\item\label{p3}  для некоторого числа $p > 1$ выполнено неравенство 
\begin{equation}\label{p3n}
\limsup_{r\to +\infty} \frac{1}{2r}
\Biggl( \int_1^r \left(2
-\Bigl(\frac{t}{r}\Bigr)^{p}\right)Z^{\rad} (t)\frac{\dd t}{t} 
+\int\limits_r^{+\infty} {\left({\frac{r}{t}}\right)}^{p}
 {Z^{\rad} (t)}\frac{\dd t }{t} \Biggr)
\geq  \frac{1}{2\pi}\frac{p^2}{p^2 -1}\prm (\partial S),  
\end{equation}
\end{enumerate}
то экспоненциальная система\/ $\Exp^{Z}$ из\/ \eqref{Exp} полна в пространстве\/ $\Hol(S)$. 

Если $S\neq \emptyset $ --- выпуклый компакт в $\CC$ и хотя бы в одном из трёх утверждений\/ 
{\rm \ref{p1}), \ref{p2})}  или {\rm \ref{p3})}
соответствующее неравенство 
\eqref{p1n}, \eqref{p2n} или \eqref{p3n}
выполнено со строгим неравенством $>$ вместо нестрогого неравенства $\geq$, 
 то система $\Exp^Z$  полна в пространстве 
$C(S)\bigcap \Hol (\intr S)$.
\end{thB}  
\begin{remark} При выборе $p=0$ в условии полноты  \eqref{p1n} из части \ref{p1}), а также 
при $p\to +\infty$   в  условии  \eqref{p3n}  части \ref{p3}) теоремы B
 получаем в точности условие полноты \eqref{NG} из теоремы A.
\end{remark}
\begin{remark}
Внешний верхний предел $\limsup\limits_{a\to +\infty}$ можно заменить как на точную нижнюю грань 
$\inf\limits_{a>1}$, так и на  предел $\lim\limits_{a\to +\infty}$,  который существует и все эти три величины совпадают
\cite[теорема 1]{KKh00}, \cite[предложение 6]{SalKha20}, когда конечна \textit{верхняя плотность распределение точек $Z$}
\begin{equation}\label{dens}
\overline\dens (Z) :=\limsup_{r\to +\infty} \frac{Z^{\rad}(r)}{r}\in \overline\RR^+.
\end{equation}
\end{remark}
\subsection{Основной  результат о полноте экспоненциальной системы}
Мы развиваем условие полноты системы $\Exp^{Z}$, 
выраженное неравенством  \eqref{p2n} из утверждения {\rm  \ref{p2})} теоремы B. 

Функция $f\colon X\to \overline \RR$ \textit{положительная\/} и пишем $f\geqslant 0$ \textit{на} $X$,
 если $f(X)\subset \overline \RR^+$,   и \textit{отрицательная\/} и пишем $f\leqslant 0$ \textit{на} $X$,  если противоположная ей функция $-f$  положительная.  Та же функция $f$  
\textit{строго\/} положительная, если  $f(X)\subset \overline \RR^+\setminus 0$ и \textit{строго\/} отрицательная, если
противоположная функция $-f$ строго положительная.  
Функция $f$ 
\textit{возрастающая\/}  (соответственно \textit{строго возрастающая}) на 
\textit{интервале} $X\subset \overline \RR$,  если для  любых $x_1,x_2\in X$ из $x_1<x_2$ следует нестрогое неравенство $f(x_1)\leqslant f(x_2)$ (соответственно строгое неравенство $f(x_1)< f(x_2)$)
Функция $f\colon I\to \overline \RR$ 
\textit{убывающая\/}  (соответственно \textit{строго убывающая}) на $I$, если 
противоположная функция $-f$ возрастающая  (соответственно строго возрастающая) на $I$.  
 
\begin{thm}\label{mthm}
Пусть   $r_0\in \RR^+$,  
$f\colon [r_0, +\infty) \to \RR$ --- выпуклая положительная убывающая функция, $Z$ --- распределение точек  на $\CC$
и $P> 0$ --- строго положительное число. Тогда имеют место  следующие  условия полноты:
\begin{enumerate}[{\rm I.}]
\item\label{PI} Если  выполнено  соотношение  
\begin{equation}\label{sZP}
\sup_{r_0\leqslant r< R<+\infty}
\left(\int_{r<t\leq R} \frac{f\bigl(t^2\bigr)}{t}\dd Z^{\rad}(t)
- \frac{P}{2\pi}\int_r^R\frac{f\bigl(t^2\bigr)}{t}\dd t\right)=+\infty, 
\end{equation}
то для любого компакта $S\subset \CC$ со связным дополнением $\CC\setminus K$ и  с периметром  границы выпуклой оболочки, удовлетворяющим неравенству 
\begin{equation}\label{prP}
\prm (\partial \conv S)\leqslant P ,
\end{equation}
экспоненциальная система $\Exp^Z$ полна в пространстве $C(S)\bigcap \Hol (\intr S)$. 

\item\label{PII} Если выполнено одно из следующих двух условий 
\begin{enumerate}[{\rm 1)}]
\item\label{PII1} расходится интеграл 
\begin{equation}\label{if}
\int_{r_0}^{+\infty}\frac{f\bigl(t^2\bigr)}{t}\dd t=+\infty
\end{equation}
и выполнено соотношение 
\begin{equation}\label{sZPR}
  \limsup_{r_0\leq r\to +\infty} \limsup_{1<a\to +\infty}
\frac{1}{\displaystyle \int_r^{ar}\frac{f\bigl(t^2\bigr)}{t}\dd t}
\int_{r<t\leq ar} \frac{f\bigl(t^2\bigr)}{t}\dd Z^{\rad}(t)
\geqslant \frac{P}{2\pi}, 
\end{equation}

\item\label{PII2}
  бесконечен  двойной нижний предел
\begin{equation}\label{lni}
\liminf_{1<a\to +\infty} \liminf_{r_0\leq r\to +\infty}\int_r^{ar} \frac{f\bigl(t^2\bigr)}{t}\dd t=+\infty
\end{equation} 
и выполнено соотношение 
\begin{equation}\label{sZPRa}
  \limsup_{1<a\to +\infty} \limsup_{r_0\leq r\to +\infty}
\frac{1}{\displaystyle \int_r^{ar}\frac{f\bigl(t^2\bigr)}{t}\dd t}
\int_{r<t\leq ar} \frac{f\bigl(t^2\bigr)}{t}\dd Z^{\rad}(t)
\geqslant \frac{P}{2\pi}, 
\end{equation}
\end{enumerate}
то  для любой  односвязной ограниченной области   $S\subset \CC$, с периметром границы её выпуклой оболочки, удовлетворяющей  неравенству \eqref{prP}, 
система $\Exp^Z$ полна в пространстве $ \Hol (S)$.
\end{enumerate}
\end{thm}

Доказательство теоремы \ref{mthm} будет дано в конце статьи в разделе \ref{Sec:th} после формулировки и доказательства  теоремы \ref{thmuS}  об интегральных оценках  распределений масс Рисса субгармонических функций 
с ограничениями на их рост через опорную функцию множества. Доказательство теоремы \ref{thmuS}
использует построенные во вспомогательном разделе \ref{Sec:stc}   специальные радиальные субгармонические функции на $\CC\setminus 0$, которые конструируются  на основе выпуклых убывающих положительных функций $f$ на 
$\RR^+\setminus 0$.

\begin{remark}\label{rem1i} Если 
$\overline\dens (Z) \overset{\eqref{dens}}{=}+\infty$, то система $\Exp^Z$ полна в   $\Hol(S)$ для любого открытого множества $S\subset  \CC$ и в $C(S)\bigcap \Hol (\intr S)$ при  любом компакте $S\subset \CC$. Таким образом, представляет интерес только  случай конечной верхней плотности $\overline\dens (Z)<+\infty$. Тогда, учитывая существование правой производной $f_+'$ для выпуклой на $\RR^+\setminus 0$ функции $f$,   при $r>0$ имеем равенства 
\begin{equation*}
\int_{r<t\leq R} \frac{f\bigl(t^2\bigr)}{t}\dd Z^{\rad}(t)
=
\frac{f\bigl(R^2\bigr)}{R}Z^{\rad}(R)-\frac{f\bigl(r^2\bigr)}{r}Z^{\rad}(r)
-\int_{r<t\leq R} \biggl(\frac{f\bigl(t^2\bigr)}{t}\biggr)_+' Z^{\rad}(t)\dd t.
\end{equation*}
Отсюда в случае ещё и  убывающей функции $f\geqslant 0$ при $\overline\dens (Z) <+\infty$ получаем 
\begin{equation}\label{ZI}
\int_{r<t\leq R} \frac{f\bigl(t^2\bigr)}{t}\dd Z^{\rad}(t)=
\int_{r}^R \frac{Z^{\rad}(t)}{t^2}\bigl(f(t^2)-2tf_+'(t^2)\bigr)\dd t+O(1)
\quad\text{при $r_0\leqslant r<R<+\infty$.}
\end{equation}
Таким образом, каждый интеграл из левой  части  \eqref{ZI}, входящий   в \eqref{sZP}, \eqref{sZPR} и \eqref{sZPRa},  
можно заменить на интеграл из правой части \eqref{ZI}, поскольку добавление постоянных к этим интегралам не влияет на условия  \eqref{sZP}, \eqref{sZPR} и \eqref{sZPRa}.
\end{remark}

\begin{example} Для выпуклой убывающей функции $f(x)\equiv 1$ при $x>0$ 
интеграл из  \eqref{lni} при любом $r>0$ равен $\ln a$ и выполнено   \eqref{lni}, 
а соотношение \eqref{sZPRa} при  учёте замечания \ref{rem1i} с соотношением \eqref{ZI}
--- это  в точности  \eqref{p2n} для $P=\prm (\partial \conv S)$. 
Таким образом, условие полноты \ref{PII2}) из теоремы~\ref{mthm} действительно обобщает условие полноты 
\ref{p2}) из теоремы B.
\end{example}

\begin{example} Функция  $f\colon x \mapsto \dfrac{1}{\ln x}$ выпуклая убывающая и положительная 
на $[e,+\infty)$, а также удовлетворяет условию \eqref{if}, но не условию \eqref{lni}. 
Следовательно,  такая функция даёт новые условия полноты  в форме \ref{PI} и \ref{PII1}).  
То же самое справедливо при любом $n\in \NN$ для  функции 
\begin{equation*}
f\colon x \underset{x\in [r_0, +\infty)}{\longmapsto} \frac{1}{\underset{\text{$n$-кратно}}{\underbrace{\ln\ln \dots \ln}}\, x}, \quad r_0:=\underset{\text{$n$-кратно}}{\underbrace{{{{e^e}^{{}^{\iddots}}}}^\text{\tiny $e$}}}.
\end{equation*}
\end{example}

\section{Одна конструкция субгармонических функций}\label{Sec:stc}

Последовательность функций $f_n\colon X\to \overline \RR$, $n\in \NN$, \textit{возрастающая,\/} если при каждом $n\in \NN$ разность $f_{n+1}-f_n\geqslant 0$ --- положительная функция на $X$, и \textit{убывающая,\/} если
последовательность $(-f_n)_{n\in \NN}$ противоположных функций $-f_n$ возрастающая. 

\subsection{Выпуклые убывающие функции на открытой положительной полуоси}
\begin{proposition}\label{pr:fF}
Если  функция $f\geqslant 0$ на  $\RR^+\setminus 0$ выпуклая и убывающая, то  
\begin{enumerate}[{\rm 1)}]
\item\label{f1} она  непрерывная
с возрастающими левой  $f_{-}'$ и правой  $f_{+}'$ конечными  производными  на $\RR^+\setminus 0$;
\item\label{f2}
существует предел $\lim\limits_{x\to +\infty} f_{\pm}'(x)=0$ и  выполнены неравенства  
$f_{-}'\leqslant  f_{+}'\leqslant 0$ на $\RR^+\setminus 0$, а также 
\begin{equation}\label{fx2}
0\geqslant f_{\pm}'(x_2)\geqslant\frac{f(x_2)-f(x_1)}{x_2-x_1}\geqslant f_{\pm}'(x_1)\quad \text{при всех $x_2>x_1>0$};
\end{equation}
\item\label{f3} существует убывающая последовательность $(f_n)_{n\in \NN}$ дважды непрерывно дифференцируемых выпуклых убывающих функций $f_n$, которая равномерно стремится к $f$, а также 
\begin{equation}\label{f'r}
0\geqslant f_{\pm}'(x)\geqslant f_n'(x-1/n) \quad\text{при  всех $x> 1/n$}.
\end{equation}
\end{enumerate}
\end{proposition}
\begin{proof} Свойства из \ref{f1})  относятся к элементарным свойствам выпуклых функций \cite[гл.~1, \S~4]{Bou65}, \cite[гл. I]{Hor94}. Свойства  \ref{f2})  легко следуют из убывания выпуклой функции $f\geqslant 0$,
где  для \eqref{fx2} используем, к примеру,   геометрический смысл  левой/правой  производной как соответственно  левой/правой полукасательной и секущей для графика выпуклой убывающей функции. 

Некоторого обсуждения требует, по-видимому, свойство   \ref{f3}), которое может быть получено из методов сглаживания выпуклых функций из книги \cite[Часть~2, гл.~3]{Bra05}. Но здесь  проще схематически описать   возможную конструкцию  требуемой убывающей последовательности   $(f_n)_{n\in \NN}$. 

Функция $f\geq 0$ убывающая и непрерывная на $(0,+\infty)$, поэтому при $n=1$ можно выбрать двустороннюю  последовательность $(x_k)_{k\in \ZZ}$ точек $x_k\in (0,+\infty)$, строго возрастающую в том смысле, что 
$x_k< x_{k+1}$ при любом $k\in \ZZ$, для которой 
$x_k\to +\infty$ при $k\to +\infty$ и $x_k\to 0$ при $k\to -\infty$, а также 
одновременно $0< x_{k+1}-x_{k}\leqslant 1/2$ и $0\leqslant f(x_{k})-f(x_{k+1})\leqslant 1/2$  при каждом 
$k\in \ZZ$. Рассмотрим кусочно-аффинную функцию $l_1$, график которой образован отрезками, соединяющими 
пару точек с координатами $\bigl(x_k, f(x_k)\bigr)$ и $\bigl(x_{k+1}, f(x_{k+1})\bigr)$. По построению 
из выпуклости и убывания $f$ следует, что функция $l_1$ выпуклая, убывающая, удовлетворяет   
неравенству $0\leqslant l_1(x)-f(x)\leqslant 1/2$ при всех $x\in (0,+\infty)$, а также согласно   \eqref{fx2} 
неравенствам $0\geqslant f_{\pm}'(x)\geqslant (l_1)_{\pm}'(x-1/2)$ при всех $x>1/2$. В достаточно  малых окрестностях точек излома графика функции $l_1$ можно сгладить её  выпуклыми сплайнами до 
дважды непрерывно дифференцируемой выпуклой убывающей функции  $f_1\geqslant l_1$ так,  что 
\begin{equation*}
0\leqslant f_1(x)-f(x)\leqslant 1 \quad\text{при всех $x\in (0,+\infty)$}, \quad 0\geqslant f_{\pm}'(x)\geqslant f_1'(x-1) \quad\text{при всех $x>1$.}
\end{equation*}
Для построения функции $f_2$ сначала добавим в каждом отрезке  $[x_k,x_{k+1 }]$ конечное число 
попарно различных точек, начиная с $x_k$ и заканчивая $x_{k+1 }$, так, 
что как расстояние между соседними точками, так и между значениями функции $f$ в этих точках 
 было  $\leqslant 1/4$. За полученной таким образом новой двусторонней  строго возрастающей  последовательностью  
сохраним то же обозначение  $(x_k)_{k\in \ZZ}$.   Снова рассмотрим кусочно-аффинную функцию $l_1$, график которой 
образован отрезками, соединяющими пару точек с координатами $\bigl(x_k, f(x_k)\bigr)$ и 
$\bigl(x_{k+1}, f(x_{k+1})\bigr)$.  По построению из выпуклости и убывания $f$ следует, что функция $l_2$ выпуклая, убывающая, удовлетворяет   неравенствам $f\leq l_2\leq f_1$ на $\RR^+\setminus 0$ и  $0\leqslant l_2(x)-f(x)\leqslant 1/4$ при всех $x\in (0,+\infty)$, а также согласно   \eqref{fx2} 
неравенствам $0\geqslant f_{\pm}'(x)\geqslant (l_2)_{\pm}'(x-1/4)$ при всех $x>1/4$. В очень малых окрестностях точек излома графика функции $l_1$ можем сгладить её выпуклыми сплайнами до 
дважды непрерывно дифференцируемой выпуклой убывающей функции $f_2\geqslant l_2$ так,  что
 $f_2\leqslant f_1$ и 
\begin{equation*}
0\leqslant f_2(x)-f(x)\leqslant 1/2 \quad\text{при всех $x\in (0,+\infty)$}, \quad 0\geqslant f_{\pm}'(x)\geqslant f_2'(x-1/2) \quad\text{при всех $x>1/2$.}
\end{equation*}
Продолжая эту процедуру, на каждом $n$-ом  шаге получаем требуемую выпуклую убывающую дважды непрерывно дифференцируемую функцию $f_n\leq f_{n-1}$ на $\RR^+\setminus 0$ при $n>1$, для которой 
\begin{equation*}
0\leqslant f_n(x)-f(x)\leqslant 1/n \quad\text{при всех $x\in (0,+\infty)$}, \quad 0\geqslant f_{\pm}'(x)\geqslant f_n'(x-1/n) \quad\text{при всех $x>1/n$.}
\end{equation*}
Это завершает доказательство свойства \ref{f3}) с неравенством  \eqref{f'r}. 
\end{proof}
\begin{remark}  Условие убывания функции $f$ на $\RR^+\setminus 0$ в предложении \ref{pr:fF}
можно заменить на формально более слабое условие  
$\limsup\limits_{x\to +\infty}{f(x)}/{x}=0$, поскольку оно  при  выпуклости функции $f\geqslant  0$ на  $\RR^+\setminus 0$
влечёт за собой убывание функции $f$ на $\RR^+\setminus 0$.
\end{remark}

Для  $a\in \overline \RR$ или функции $a$ со значениями в $\overline \RR$
полагаем  $a^+:=\sup\{a,0\}$.

\begin{proposition}\label{pr:fh}
Пусть  функция $f\geqslant 0$   на  $\RR^+\setminus 0$ выпуклая и убывающая.  Тогда   
для любого числа  $R>0$   положительная функция $F_R$, определённая равенствами 
\begin{equation}\label{fF}
F_R(x):=  \biggl(\frac{1}{x}f(x^2)-\frac{1}{R}f(R^2)\biggr)^+\quad \text{при каждом $x\in \RR^+\setminus 0$}, 
\end{equation} 
непрерывная, убывающая и обладает левой $(F_R)_{-}'$ и правой  $(F_R)_{+}'$ конечными  производными всюду на $\RR^+\setminus 0$, а также для неё  выполнены соотношения 
\begin{subequations}\label{F}
\begin{align}
F_R(x)&\geqslant 0=F_R(R)\text{ при $x\in (0,R]$}, 
\tag{\ref{F}+} \label{FR0}
\\
  F_R(x)&\equiv F_R'(x)\equiv  0\text{ при $x\in (R, +\infty)$},
\tag{\ref{F}$_0$} \label{FR1}
\\
F_R(x)&=\frac{1}{x}f(x^2)-\frac{1}{R}f(R^2)\geqslant 0\quad\text{при $x\in (0,R)$}, 
\tag{\ref{F}$_R$}\label{F_R}
\\
(F_R)_{\pm}'(x)&=-\frac{1}{x^2}f(x^2)+2f_{\pm}'(x^2)\quad\text{при $x\in (0,R)$}, 
\tag{\ref{F}$'$}\label{FRpm}
\\
 (F_R)_{-}'(R)&=-\frac{1}{R^2}f(R^2)+2f_{-}'(R^2), \quad (F_R)_{+}'(R)=0, 
\tag{\ref{F}R}\label{FRR}
\\
\bigl|(F_R)_{\pm}'(x)\bigr|&\leq \frac{f(x^2)}{x^2}+\frac{2f(x)}{x(x-1)}
\leq \frac{3f(x)}{x(x-1)}\quad\text{при $x>1$}. 
\tag{\ref{F}$\leqslant$}\label{F1R}
\end{align}
\end{subequations}
\end{proposition}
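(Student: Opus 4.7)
The plan is to reduce everything to properties of the auxiliary function $g(x):=\frac{1}{x}f(x^2)$ on $\RR^+\setminus 0$. First I would verify that $g$ is nonnegative, continuous, and decreasing: it is the product of the two continuous nonnegative decreasing functions $1/x$ and $f(x^2)$ (the latter decreasing because $f$ is decreasing and $x\mapsto x^2$ is increasing on $\RR^+\setminus 0$). Since $F_R(x)=\bigl(g(x)-g(R)\bigr)^+$, the monotonicity of $g$ immediately gives $F_R=g-g(R)\geqslant 0$ on $(0,R]$ and $F_R\equiv 0$ on $[R,+\infty)$, which yields \eqref{FR0}, \eqref{FR1}, \eqref{F_R}, along with continuity and the decreasing property of $F_R$.

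Next I would compute the one-sided derivatives of $g$. Since $f$ possesses left and right derivatives everywhere by Proposition \ref{pr:fF}, and the outer factor $1/x$ as well as the inner map $x\mapsto x^2$ are smooth on $\RR^+\setminus 0$, the one-sided chain and product rules apply and give
\[
g_{\pm}'(x)=-\frac{f(x^2)}{x^2}+2f_{\pm}'(x^2),\qquad x\in \RR^+\setminus 0.
\]
Both summands are nonpositive (the second because $f_{\pm}'\leqslant 0$ by Proposition \ref{pr:fF}), and both are increasing in $x$: the term $-f(x^2)/x^2$ is the negative of a product of two nonnegative decreasing functions, while $f_{\pm}'(x^2)$ is the composition of the increasing function $f_{\pm}'$ with the increasing map $x\mapsto x^2$. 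Therefore $g_{\pm}'\leqslant 0$ is itself increasing on $\RR^+\setminus 0$.

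From these properties of $g$ the derivative formulas \eqref{FRpm} and \eqref{FRR} follow directly: on $(0,R)$ the function $F_R$ differs from $g$ only by the constant $g(R)$, so $(F_R)_{\pm}'=g_{\pm}'$; at $x=R$ the left derivative still equals $g_-'(R)$, whereas $(F_R)_+'(R)=0$ because $F_R\equiv 0$ on $[R,+\infty)$. Monotonicity of the one-sided derivatives of $F_R$ on all of $\RR^+\setminus 0$ is then checked by splitting at $R$: they are increasing on $(0,R)$ (inherited from $g_{\pm}'$), identically $0$ on $[R,+\infty)$, and the jump at $R$ is upward because $g_{\pm}'(R)\leqslant 0=(F_R)_+'(R)$.

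For the bound \eqref{F1R}, nonpositivity of both terms in the derivative formula gives $\bigl|(F_R)_{\pm}'(x)\bigr|=\frac{f(x^2)}{x^2}+2|f_{\pm}'(x^2)|$. Applying inequality \eqref{fx2} with $x_1=x$ and $x_2=x^2$ (valid whenever $x>1$, since then $x^2>x>0$) yields $|f_{\pm}'(x^2)|\leqslant \frac{f(x)-f(x^2)}{x^2-x}\leqslant \frac{f(x)}{x(x-1)}$, which gives the first inequality in \eqref{F1R}. The second reduces to $\frac{f(x^2)}{x^2}\leqslant \frac{f(x)}{x(x-1)}$, and this follows from $f(x^2)\leqslant f(x)$ (monotonicity of $f$ together with $x^2\geqslant x$ for $x\geqslant 1$) and $x-1<x$. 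I do not anticipate a genuine obstacle; the only point requiring care is the invocation of the one-sided chain and product rules, which is routine here because the auxiliary factors $1/x$ and $x^2$ are $C^\infty$.
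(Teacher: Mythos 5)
Your proposal is correct and follows essentially the same route as the paper: both reduce everything to the auxiliary function $x\mapsto f(x^2)/x$, using its monotonicity for \eqref{FR0}--\eqref{F_R}, the one-sided product/chain rules for \eqref{FRpm}--\eqref{FRR}, and inequality \eqref{fx2} with $x_1=x$, $x_2=x^2$ for \eqref{F1R}. Your explicit verification that $g_{\pm}'$ is a sum of two nondecreasing functions merely fills in the convexity claim the paper dispatches by citing general properties of products of convex factors.
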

\begin{proof}  Все свойства функции $F_R$ до группы соотношений \eqref{F} 
автоматический следуют из свойств функции $f$, отражённых  в п. \ref{f1}) предложения \ref{pr:fF}, если учесть что функция-множитель  $x\mapsto 1/x$ при $f$ в \eqref{fF}  убывающая и бесконечно дифференцируемая на $\RR^+\setminus 0$. 

Соотношения \eqref{FR0} очевидно по построению \eqref{fF}  функции $F_R$. 

В силу убывания функции $x\mapsto f(x^2)/x$, во-первых,   имеем   
\begin{equation*}
\frac{1}{x}f(x^2)\leqslant \frac{1}{R}f(R^2)\text{ при $x\in [R, +\infty)$}, 
\end{equation*}
откуда по построению   функции $F_R$ в \eqref{fF} получаем тождества  \eqref{FR1}, 
а во-вторых, 
\begin{equation*}
\frac{1}{x}f(x^2)\geqslant \frac{1}{R}f(R^2)\text{ при $x\in (0,R]$}, 
\end{equation*}
что по построению   функции $F_R$ в \eqref{fF} влечёт за собой  \eqref{F_R}. 
 
Вычисление левой и правой производных для \eqref{F_R}
даёт \eqref{FRpm}, а вместе с \eqref{FR0}--\eqref{FR1} и  \eqref{FRR}.

Наконец, из \eqref{FRpm}, \eqref{FRR} и \eqref{FR1} имеем 
\begin{equation*}
\bigl|(F_R)_{\pm}'(x)\bigr|\leq \frac{f(x^2)}{x^2}
+2\bigl|f_{\pm}'(x^2)\bigr|,
\end{equation*}
а применение неравенства \eqref{fx2} при $x_2:=x^2>x=:x_1>1$ влечёт за собой неравенства 
\begin{equation*}
\bigl|f_{\pm}'(x^2)\bigr|\leqslant \frac{f(x)-f(x^2)}{x^2-x}\leqslant \frac{f(x)}{x(x-1)}.
\end{equation*} 
Таким образом, установлено первое неравенство в \eqref{F1R}. Второе неравенство в \eqref{F1R}при $x>1$
для убывающей функции $f\geqslant 0$ очевидно. 
\end{proof}

\subsection{Построение субгармонических функций с помощью  выпуклых}

\begin{proposition}\label{pr:cbF}
Для любой функции $F_R$ вида \eqref{fF} из предложения\/ {\rm \ref{pr:fh}} с выпуклой и убывающей  
функцией $f\geqslant 0$ на  $\RR^+\setminus 0$  радиальная функция
\begin{equation}\label{ap}
V(re^{i\theta}):= F_R(r)\overset{\eqref{fF}}{=}
 \biggl(\frac{1}{r}f(r^2)-\frac{1}{R}f(R^2)\biggr)^+, \quad 0<r<+\infty,   0\leq \theta < 2\pi, \quad re^{i\theta}\in \CC,
\end{equation}
  --- положительная непрерывная субгармоническая функция на $\CC\setminus 0$ и 
существует последовательность функций $(f_n)_{n\in \NN}$ со всеми свойствами из утверждения\/ {\rm \ref{f3})} 
предложения {\rm \ref{pr:fF}}, для которых последовательность  субгармонических радиальных на $\CC\setminus 0$ функций $V_{n}$ вида 
\begin{equation}\label{apn}
V_{n}(re^{i\theta}):= \biggl(\frac{1}{r}f_n(r^2)-\frac{1}{R}f_n(R^2)\biggr)^+, \quad 0<r<+\infty,  \quad 0\leq \theta < 2\pi, 
\end{equation}
 сходится  к функции $V$ равномерно на компактах из $\CC\setminus 0$, а каждая из функций $V_n$ --- это сужение на 
проколотый круг $\overline D(R)\setminus 0$ дважды непрерывно дифференцируемой на $\CC\setminus 0$ функции 
\begin{equation}\label{without+}
v_n\colon re^{i\theta}\longmapsto \frac{1}{r}f_n(r^2)-\frac{1}{R}f_n(R^2), \quad re^{i\theta}\in \CC\setminus 0.
\end{equation}
При этом  модуль производной по внешней нормали $\vec n_{\operatorname{out}}$
к границе  кольца $D(R)\setminus \overline D(r)$ в точках на окружности $\partial \overline D(r)\subset \partial \bigl(D(R)\setminus \overline D(r)\bigr)$ для каждой из функций 
 $V_n$ и $V$ оценивается как
\begin{equation}\label{drob}
\sup_{\theta\in \RR}\max\Biggl\{ \biggl|\frac{\partial V}{\partial \vec n_{\operatorname{out}}}(re^{i\theta})\biggr|, 
\sup_{n\in \NN} \biggl|\frac{\partial V_n}{\partial \vec n_{\operatorname{out}}}(re^{i\theta})\biggr|
\Biggr\}
\leqslant 
\frac{3f_1(r)}{r(r-1)}, \quad 1<r< R.
\end{equation}

\end{proposition}
\begin{proof} 
По построению \eqref{apn} и по свойствам функций $f_n$ каждая из функций  $V_{n}$ --- это сужение дважды непрерывно дифференцируемой на $\CC\setminus 0$ функции  \eqref{without+}.
Из равномерной сходимости последовательности $(f_n)_{n\in \NN}$ к функции $f$ и вида радиальных  функций $V$ и $V_n$ в \eqref{ap}--\eqref{apn} следует, что для любого $r\in (0,R)$
последовательность  функций $V_n$ равномерно сходится к $V$ на кольце $\overline D(R)\setminus D(r)$, а значит сходится равномерно на компактах из $\CC\setminus 0$.  

Сначала установим субгармоничность на $\CC\setminus 0$ функций $v_n$ из \eqref{without+},
для которых 
\begin{equation}\label{f'}
\frac{\partial v_n}{\partial r}(re^{i\theta})\overset{\eqref{FRpm}}{=}-\frac{1}{r^2}f_n(r^2)+2f_n'(r^2),\quad 
\frac{\partial^2 v_n}{\partial r^2}(re^{i\theta})=\frac{2}{r^3}f_n(r^2)-\frac{2}{r}f'_n(r^2)+4rf_n''(r^2), \quad r>0.
\end{equation}
Прямое вычисление оператора Лапласа в полярных координатах 
\begin{equation*}
{\bigtriangleup}=\frac{\partial^2}{\partial r^2}+\frac1{r}\frac{\partial}{\partial r}
+\frac{1}{r^2}\frac{\partial^2}{\partial\theta^2}
\end{equation*}  
от  функции $v_n$ из \eqref{without+} на  $\CC\setminus 0$
даёт согласно \eqref{f'} равенство 
\begin{equation*}
{\bigtriangleup}v_n(re^{i\theta})\overset{\eqref{f'}}{=}\Bigl(\frac{2}{r^3}f_n(r^2)-\frac{2}{r}f'_n(r^2)+4rf_n''(r^2)\Bigr)
+\frac{1}{r} \Bigl(-\frac{1}{r^2}f_n(r^2)+2f_n'(r^2)\Bigr)
\end{equation*}
откуда, после раскрытия скобок и приведения подобных, получаем  
\begin{equation}\label{dfh}
{\bigtriangleup}v_n(re^{i\theta})
=\frac{1}{r^3}f_n(r^2)+4rf_n''(r^2)\geq 0\text{ при  
$re^{i\theta}\in \CC\setminus 0$} 
\end{equation}
в силу положительности $f_n\geqslant 0$ и выпуклости $f_n$, дающей  положительность $f_n''\geqslant 0$ на $\RR^+\setminus 0$. Таким образом, каждая функция $v_n$ из  \eqref{f'} субгармоническая. 
Следовательно, и каждая функция $V_n=v_n^+:=\sup\{v_n,0\}$ по построению  \eqref{apn} субгармоническая на $\CC\setminus 0$. Равномерная сходимость на компактах из $\CC\setminus 0$  последовательности субгармонических непрерывных функций $V_n$ к функции $V$ обеспечивает и её субгармоничность на $\CC\setminus 0$. 

По неравенству \eqref{F1R} предложения \ref{pr:fh} модули левой производной  по радиусу для первых сомножителей в определениях \eqref{ap} для $V$ и  \eqref{apn} для $V_n$ 
не превышают соответственно дробей 
\begin{equation*}
\frac{3f(r)}{r(r-1)}, \quad \frac{3f_n(r)}{r(r-1)},\quad 1<r<R, 
\end{equation*}
каждая из которых не больше 
\begin{equation*}
\frac{3f_1(r)}{r(r-1)},\quad 1<r<R.  
\end{equation*}
Отсюда для модуля производной $\frac{\partial}{\partial \vec n_{\operatorname{out}}}$
к кольцу $D(R)\setminus \overline D(r)$ на  $\partial \overline D(r)$ от  функций 
$V$ и    $V_n$,   равного  модулю левой производной  по радиусу в точках на $\partial \overline D(r)$, 
получаем \eqref{drob}.  
\end{proof}

\section{Оценки распределений масс Рисса субгармонических функций}\label{Sec:th}

Для $z\in \CC$, как обычно, $\Bar z$ --- комплексное число, сопряжённое с $z$. 
\textit{Опорной функцией множества\/}  $S\subset \CC$ называется  функция \cite{BF}
\begin{equation}\label{Spf}
{\Spf}_{S}\colon z\underset{z\in \CC}{\longmapsto} \sup\limits_{s\in S}\operatorname{Re} s\Bar z\in \overline \RR. 
\end{equation}
Опорная функция ${\Spf}_{S}$ принимает только конечные значения из $\RR$, если и только если  $S\neq \emptyset$ --- ограниченное в $\CC$.  Для такого $S\subset \CC$  его опорная функция по построению \eqref{Spf} 
\textit{положительно однородная} и \textit{выпуклая,\/}  а следовательно, \textit{непрерывная\/} и \textit{субгармоническая  на\/} $\CC$ \cite{HK}, \cite{Rans}.

Каждой субгармонической на $\CC$ функцией $u\not\equiv -\infty$ 
соответствует   положительная  мера Радона $\varDelta_u:=\frac{1}{2\pi}{\bigtriangleup} u$, 
где ${\bigtriangleup}$  --- {\it оператор Лапласа,\/} действующий в смысле теории обобщённых функций 
 \cite{HK}, \cite{Hor94}, \cite{Rans}.  Меру $\varDelta_u$ называем \textit{распределением масс Рисса\/}
субгармонической функции $u$.  

Для меры Радона $\mu$ на $\CC$ её  \textit{считающая радиальная   функция\/}  обозначается и определяется как
\begin{equation*}
\mu^{\rad}\colon r\underset{r\in \RR^+}{\longmapsto} \mu\bigl(\overline D(r)\bigr).
\end{equation*}

\begin{thm}\label{thmuS} Пусть $S\subset \CC$ --- ограниченный компакт в $\CC$  с опорной функцией $\Spf_{S}$ и периметром $\prm (\partial \conv S)>0$ границы $\partial \conv S$ выпуклой оболочки $\conv S$ множества $S$.   Если $u\not\equiv -\infty$ --- субгармоническая функция с распределением масс Рисса $\varDelta_u$,  удовлетворяющая неравенствам
\begin{equation}\label{uSpf}
u(z)\leqslant  \Spf_{S}(z)+c\quad\text{для некоторого $c\in \RR$ при всех $z\in \CC$},
\end{equation}  
то для любой положительной  выпуклой убывающей на 
$\RR^+\setminus 0$  функции  $f$  существует число $C\in \RR$, для которого выполнено неравенство  
\begin{equation}\label{ituD}
\int_{r}^R  \frac{f\bigl(t^2\bigr)}{t}\dd \varDelta_u^{\rad}(t)
\leqslant \frac{1}{2\pi}\prm (\partial \conv S)\int_r^R\frac{f\bigl(t^2\bigr)}{t}\dd t+C
\quad\text{при всех\/ $1<r<R<+\infty$.}
\end{equation}
\end{thm}
\begin{proof} Не умаляя общности, в \eqref{uSpf} можно положить  $c=0$, поскольку распределение масс 
Рисса для субгармонической функции $u-c$ то же самое, что и для $u$. 

Основную роль  при доказательстве будет играть следующая 

\begin{lemma}[{\rm \cite[леммы 2.2--2.3]{Kha91_1}}]\label{lem2_2} 
Пусть $0<r<R<+\infty$ и  функция 
$V$ положительна на замкнутом кольце $\overline D(R)\setminus D(r)$,
 субгармоническая в его  внутренности $D(R)\setminus \overline D(r)$, 
тождественно равна  нулю на окружности $\partial \overline D(R)$ и совпадает с  сужением  
на   $\overline D(R)\setminus D(r)$
некоторой дважды непрерывно дифференцируемой  в окрестности  кольца
 $\overline D(R)\setminus D(r)$ функции. 
Используя инверсию  функции $V$ относительно окружности $\partial \overline D(r)$, построим  положительную 
на\/ $\CC$ функцию 
\begin{equation}\label{V*}
V^*(z):=
\begin{cases}
V(z)&\text{при $r< |z|\leqslant R$},\\
V(r^2/\Bar z)&\text{при $r^2/R< |z|\leqslant  r$},\\
0&\text{при  $|z|\leqslant  r^2/R$ и $|z|>R$},
\end{cases} \quad z\in \CC.
\end{equation}

Тогда для любой пары субгармонических на окрестности круга $\overline D(R)$ функций $u\not\equiv -\infty$ и $M\not\equiv -\infty$ с распределениями масс Рисса соответственно $\varDelta_u$ и $\varDelta_M$
из неравенства   $u\leq M$ на этой окрестности следует неравенство 
\begin{equation}\label{VuMn}
\int_{\CC} V^*\dd \varDelta_u\leqslant 
\int_{\CC} V^*\dd \varDelta_M+ \frac{r}{\pi}\int_0^{2\pi} \bigl(u(re^{i\theta})-M(re^{i\theta})\bigr)
\frac{\partial V}{\partial \vec n_{\operatorname{out}}}(re^{i\theta}) \dd \theta, 
\end{equation}  
где  по-прежнему, как и в конце формулировки предложения {\rm \ref{pr:cbF}},
$\frac{\partial}{\partial \vec n_{\operatorname{out}}}$  --- оператор дифференцирования по внешней нормали  
к кольцу $D(R)\setminus \overline D(r)$ в точках на окружности $\overline D(r)$.
\end{lemma}

В качестве функции $M$ выберем опорную функцию $\Spf_S$.
Её распределение масс  Рисса в полярных координатах определяется как произведение мер \cite[п. 3.3.1]{Khsur}
\begin{equation}\label{rl}
\dd \varDelta_M=\dd \varDelta_{\Spf_S}=\frac{1}{2\pi}\dd r \otimes \dd l_{\conv S}(\theta),
\end{equation}
где $l_{\conv S}(\theta)$ --- длина дуги границы $\partial \conv S$, отсчитываемой при движении по границе <<против часовой стрелки>> от последней точки опоры   опорной к $\clos \conv S$ прямой, ортогональной положительной полуоси $\RR^+$, до последней точки опоры   опорной к $\clos \conv S$ прямой, ортогональной направлению радиус-вектора  точки $e^{i\theta}$ \cite{BF}, \cite[п. 3.3.1]{Khsur}. В частности, 
\begin{equation}\label{lprm}
\int_0^{2\pi}\dd l_{\conv S}(\theta)=l_{\conv S}(2\pi)-l_{\conv S}(0)=\prm (\partial \conv S).
\end{equation}

В качестве функции $V$ выбираем  функции $V_n$, определённые как  в \eqref{apn}--\eqref{without+} из предложения   
\ref{pr:cbF} для функции из \eqref{ap} по функции $f$ из условия доказываемой теоремы  \ref{thmuS}, $n\in \NN$.
Для функций $V:=V_n$ по предложению \ref{pr:cbF} выполнены все условия леммы \ref {lem2_2},  
а положительные по построению  функции $V_n^*$, построенные как в \eqref{V*}, будут иметь явный радиальный вид 
\begin{equation}\label{V*n}
0\leq V_n^*(z):=
\begin{cases}
{\displaystyle\frac{1}{|z|}f_n\bigl(|z|^2\bigr)-\frac{1}{R}f_n(R^2)}&\text{при $r< |z|\leqslant R$},\\
{\displaystyle\frac{|z|}{r^2}f_n\bigl(r^4/|z|^2\bigr)-\frac{1}{R}f_n(R^2)}
&\text{при $r^2/R< |z|\leq r$},\\
0&\text{при  $|z|\leq r^2/R$ и $|z|>R$},
\end{cases} \quad z\in \CC.
\end{equation}
По лемме  \ref {lem2_2} из заключения \eqref{VuMn} с учётом \eqref{uSpf} для  $c=0$, \eqref{rl} 
и \eqref{drob} при $1<r<R<+\infty$ следует 
\begin{multline*}
\int_{\CC} V_n^*\dd \varDelta_u\leqslant 
\frac{1}{2\pi}\int_{r^2/R}^{R}\int_0^{2\pi} V_n^*(te^{i\theta})\dd t\dd l_{\conv S}(\theta)+ \frac{r}{\pi}\int_0^{2\pi} \Bigl(\bigl|u(re^{i\theta})|+\bigl|\Spf(re^{i\theta})\bigr|\Bigr)
\biggl|\frac{\partial V_n}{\partial \vec n_{\operatorname{out}}}(re^{i\theta})\biggr| \dd \theta\\
\overset{\eqref{V*n},\eqref{drob}}{\leqslant}  
\left(\int_{r^2/R}^{r} \frac{t}{r^2}f_n\bigl(r^4/t^2\bigr)\dd t
+\int_{r}^{R} \frac{f_n\bigl(t^2\bigr)}{t}\dd t\right)
\frac{1}{2\pi}\int_0^{2\pi}\dd l_{\conv S}(\theta)\\
+ \frac{r}{\pi}\biggl(\int_0^{2\pi} \bigl|u(re^{i\theta})\bigr|
 \dd \theta+\int_0^{2\pi} \bigl|\Spf(re^{i\theta})\bigr|
\dd \theta \biggr)\frac{3f_1(r)}{r(r-1)}.
\end{multline*}  
Отсюда согласно \eqref{lprm} и  \eqref{V*n} при $2\leq r<R<+\infty$ получаем 
\begin{multline}\label{VuMn+}
\int_{\overline D(R)\setminus \overline D(r)} \frac{f_n\bigl(|z|^2\bigr)}{|z|}\dd \varDelta_u(z)
\leqslant \int_{\overline D(R)\setminus \overline D(r)} \frac{1}{R}f_1(R^2)\dd \varDelta_u
\\+\left(\frac{1}{2}f_1(r^2) +\int_{r}^{R} \frac{f_n\bigl(t^2\bigr)}{t}\dd t\right)\frac{1}{2\pi}\prm(\partial \conv S)\\
+ \frac{3f_1(r)}{r}\int_0^{2\pi} \bigl|u(re^{i\theta})\bigr| \dd \theta+2 f_1(r)\max_{\theta\in [0,2\pi)} \bigl|\Spf(e^{i\theta})\bigr|.
\end{multline}
Опорная   функция $\Spf_S$  удовлетворяет ограничению $\Spf(z)=O\bigl(|z|\bigr)$ при $z\to \infty$
в силу её положительной однородности. Отсюда  функция $u$ согласно   \eqref{uSpf} удовлетворяет условию 
\begin{equation*}
\limsup_{z\to +\infty} \frac{u(z)}{|z|}<+\infty,
\end{equation*}
т.е. функция $u$ \textit{конечного типа при порядке $1$.} Следовательно, 
для распределения масс Рисса $\varDelta_u$ такой функции имеем $\varDelta_u^{\rad}(t)=O(t)$, 
а также  \cite[лемма 6.2]{KhaShm19}
\begin{equation*}
\int_0^{2\pi} \bigl|u(re^{i\theta})\bigr| \dd \theta=O(r)\quad \text{при $r\to +\infty$.}
\end{equation*}
Отсюда  ввиду убывания функции $f_1$ из  \eqref{VuMn+} следует существование числа $C\in \RR^+$, 
для которого 
\begin{equation*}
\int_r^R \frac{f_n\bigl(t^2\bigr)}{t}\dd \varDelta_u^{\rad}(t)
\leqslant \frac{1}{2\pi}\prm(\partial \conv S) \int_{r}^{R} \frac{f_n\bigl(t^2\bigr)}{t}\dd t+C
\quad\text{при всех $2\leqslant r<R<+\infty$.}
\end{equation*}
Следовательно, для убывающей и равномерно сходящейся к $f$ последовательности функций $f_n$ из предложения  \ref{pr:fF} имеем неравенства 
\begin{equation*}
\int_r^R \frac{f\bigl(t^2\bigr)}{t}\dd \varDelta_u^{\rad}(t)
\leqslant \frac{1}{2\pi}\prm(\partial \conv S) \int_{r}^{R} \frac{f\bigl(t^2\bigr)}{t}\dd t+C
\quad\text{при всех $2\leqslant r<R<+\infty$,}
\end{equation*}
где нижнее ограничение $2\leqslant r$  ввиду 
конечности интегралов по отрезкам  $[1,2]$  можно заменить на $1\leqslant r$, увеличивая, если  необходимо,  число $C\in \RR^+$.  
\end{proof}

\begin{proof}[Доказательство теоремы\/ {\rm \ref{mthm}}.]
Отметим сначала, что функция $f$ может быть продолжена на весь луч $\RR^+\setminus 0$
как выпуклая, положительная  и убывающая, возможно, изменив $r_0$ на чуть большее. 

Допустим, что экспоненциальная система $\Exp^{Z}$ не полна в пространстве $C(S)\bigcap \Hol (\intr S)$
для некоторого компакта $S\subset \CC$ со связным дополнением $\CC\setminus S$, удовлетворяющего \eqref{prP}. По теореме Хана\,--\,Банаха  \cite[п.~1.1.1]{Khsur}
и теореме  Рисса о  представлении линейных функционалах это означает, что найдётся 
  комплекснозначная мера Радона $\mu\neq 0$ с носителем $\supp \mu \subset S$, для которой ненулевая целая 
функция 
\begin{equation}\label{gei}
g(z)\underset{z\in \CC}{=}\int_{S} e^{zs}\dd\mu(s)\not\equiv 0,
\end{equation}
\textit{обращается в нуль на $Z$ с учётом кратности\/} в том смысле, что для каждой точки $z\in \CC$
кратность корня целой функции $f$ в точке $z$ не меньше $Z(z)$. При этом функция $u:=\ln |g|\not\equiv -\infty$ --- субгармоническая с распределением масс Рисса $\varDelta_u\geqslant Z$, где $Z$  рассматривается как мера, определённая в \eqref{Z}. Из представления \eqref{gei} и определения \eqref{Spf} опорной функции следует 
\begin{equation}\label{uSy}
u(z)=\ln\bigl|g(z)\bigr|\leq \ln \Bigl(\exp \bigl(\sup_{s\in S}\Re zs\bigr)|\mu|(S)\Bigr)
\leqslant \Spf_{\Bar S}(z)+\ln |\mu|(S)\quad\text{при всех $z\in \CC$},
\end{equation}
где $|\mu|$ --- полная вариация меры $\mu$, а $\Bar S:=\bigl\{\Bar z\bigm| z\in S\bigr\}$ --- компакт, зеркально симметричный компакту $S$ относительно  вещественной оси.
Неравенство в \eqref{uSy} означает, что выполнено условие  \eqref{uSpf} теоремы \ref{thmuS}
с $\Bar{S}$ вместо $S$. По теореме \ref{thmuS} для любой положительной  выпуклой убывающей на 
$\RR^+\setminus 0$  функции  $f$  существует число $C\in \RR$, для которого выполнено неравенство  
\eqref{ituD} с $\prm \bigl(\partial \conv \Bar{S}\bigr)$, очевидно, равным   $\prm (\partial \conv {S})$.
Таким образом, ввиду неравенства для мер $Z\leqslant \varDelta_u$ получаем 
\begin{equation}\label{intg}
\int_{r}^R  \frac{f\bigl(t^2\bigr)}{t}\dd Z^{\rad}(t)\leqslant 
\int_{r}^R  \frac{f\bigl(t^2\bigr)}{t}\dd \varDelta_u^{\rad}(t)
\leqslant \frac{1}{2\pi}\prm (\partial \conv S)\int_r^R\frac{f\bigl(t^2\bigr)}{t}\dd t+C
\end{equation}
для некоторого числа $C\in \RR$ при всех $1<r<R<+\infty$. Нижнее ограничение $1<r$ здесь 
можно поменять на $r_0\leqslant r$, поскольку возможно и  добавляющиеся при это интегралы  
по интервалам с концами $r_0$ и $1$ конечны. Неравенство между крайними частями \eqref{intg}
означает, что 
\begin{equation*}
\sup_{r_0\leqslant r< R<+\infty}\left(\int_{r<t\leq R} \frac{f\bigl(t^2\bigr)}{t}\dd Z^{\rad}(t)
- \frac{\prm (\partial \conv S)}{2\pi}\int_r^R\frac{f\bigl(t^2\bigr)}{t}\dd t\right)<+\infty, 
\end{equation*}
откуда ввиду неравенства $\prm (\partial \conv S)\overset{\eqref{prP}}{\leqslant} P $
конечна точная верхняя грань в левой части равенства  \eqref{sZP}, что противоречит  условию   
\eqref{sZP}. Это противоречие доказывает, что на самом деле  
 система $\Exp^Z$ полна в  $C(S)\bigcap \Hol (\intr S)$. 
Таким образом, часть \ref{PI} теоремы \ref{mthm} доказана. 

Перейдём к доказательству части \ref{PII} теоремы \ref{mthm} в условиях сначала из \ref{PII1}).
Пусть теперь $S\neq \emptyset$ --- односвязная  ограниченная область  в $\CC$, удовлетворяющяя  \eqref{prP},
а $K\subset S$ --- непустой компакт со связным дополнением $\CC\setminus K$. Тогда $\conv K$ --- выпуклый компакт в выпуклой открытой области $\conv S$, очевидно, со связным  дополнением $\CC\setminus \conv K$, а  
$\prm(\partial \conv K)<\prm(\partial \conv S)$ \cite{Leicht}. Выберем промежуточные  числа  $P_1, P_2, P_3\in \RR^+$ так,что 
\begin{equation}\label{P123}
\prm(\partial \conv K)\leq P_3<P_2<P_1<\prm(\partial \conv S)\overset{\eqref{prP}}{\leq} P. 
\end{equation}  
  
Соотношение \eqref{sZPR} означает, что  найдётся достаточно большое $r\geqslant r_0$, для которого  
\begin{equation*}
 \limsup_{1<a\to +\infty}
\frac{1}{\displaystyle \int_{r}^{a{r}}\frac{f\bigl(t^2\bigr)}{t}\dd t}
\int_{r<t\leq ar} \frac{f\bigl(t^2\bigr)}{t}\dd Z^{\rad}(t)
\geqslant \frac{P_1}{2\pi}.
\end{equation*}
Последнее согласно \eqref{P123} означает, что найдётся возрастающая последовательность $(a_k)_{k\in \NN}$
чисел  $1<a_k\to \infty$, для которой 
\begin{equation*}
 \frac{1}{\displaystyle \int_{r_n}^{a_k{r}}\frac{f\bigl(t^2\bigr)}{t}\dd t}
\int_{r<t\leq a_kr} \frac{f\bigl(t^2\bigr)}{t}\dd Z^{\rad}(t)
\geqslant \frac{P_2}{2\pi}\overset{\eqref{P123}}{=}\frac{P_3}{2\pi}+\frac{P_2-P_3}{2\pi}
\quad\text{при всех  $k\in \NN$,} 
\end{equation*}
где последнее слагаемое строго положительно. Отсюда следует 
\begin{equation*}
 \int_{r<t\leq a_kr} \frac{f\bigl(t^2\bigr)}{t}\dd Z^{\rad}(t)
- \frac{P_3}{2\pi} \int_{r}^{a_k{r}}\frac{f\bigl(t^2\bigr)}{t}\dd t
\geqslant \frac{P_2-P_3}{2\pi}\int_{r_n}^{a_k{r}}\frac{f\bigl(t^2\bigr)}{t}\dd t
\quad\text{при всех  $k\in \NN$,} 
\end{equation*}
В  силу расходимости интеграла \eqref{if} и строгой положительности $P_2-P_3>0$ правая часть здесь неограниченно возрастает и поэтому  получаем 
\begin{equation*}
\sup_{k}\left(\int_{r<t\leq a_kr} \frac{f\bigl(t^2\bigr)}{t}\dd Z^{\rad}(t)
- \frac{P_3}{2\pi} \int_{r}^{a_k{r}}\frac{f\bigl(t^2\bigr)}{t}\dd t\right)=+\infty. 
\end{equation*}
Тем более, выполнено соотношение \eqref{sZP} с числом $P_3\overset{\eqref{P123}}{\geqslant} \prm(\partial \conv K)$ вместо $P$.  Отсюда по доказанной части \ref{PI} теоремы \ref{mthm}
экспоненциальная система $\Exp^Z$ полна в пространстве $C(K)\bigcap \Hol (\intr K)$. В силу произвола в выборе компакта $K$ со связным дополнением $\CC\setminus K$ в односвязной  ограниченной области  $S\subset \CC$ такие компакты исчерпывают односвязную   ограниченную область  $S\subset \CC$. Это доказывает  
полноту экспоненциальной системы $\Exp^Z$ в пространстве $\Hol (S)$.
Тем самым в условиях \ref{PII1}) часть \ref{PII} теоремы \ref{mthm} доказана. 

Перейдем к доказательству части \ref{PII} теоремы \ref{mthm} в условиях \ref{PII2}).
Пусть, по-прежнему,  $S\neq \emptyset$ --- открытое ограниченное множество в $\CC$, удовлетворяющее  \eqref{prP},
и пусть выполнены условия \eqref{lni}--\eqref{sZPRa}.  
Предположим, что экспоненциальная  система $\Exp^Z$ не полна в пространстве $\Hol (S)$. Это означает, что существует такой компакт $K\subset S$ со связным дополнением $\CC\setminus K$, что эта  система $\Exp^Z$  не полна в 
$C(K)\bigcap \Hol(\intr K)$. Тогда по  доказанной части \ref{PI} теоремы \ref{mthm} имеем 
\begin{equation*}
\sup_{r_0\leqslant r< R<+\infty}\left(\int_{r<t\leq R} \frac{f\bigl(t^2\bigr)}{t}\dd Z^{\rad}(t)
- \frac{\prm(\partial \conv K)}{2\pi}\int_r^R\frac{f\bigl(t^2\bigr)}{t}\dd t\right)<+\infty, 
\end{equation*}
откуда следует существование числа $C\in \RR$, для которого, выбирая $R:=ar$, получаем 
  \begin{equation*}
\int_{r<t\leq ar} \frac{f\bigl(t^2\bigr)}{t}\dd Z^{\rad}(t)\leq  \frac{\prm(\partial \conv K)}{2\pi}
\int_r^{ar}\frac{f\bigl(t^2\bigr)}{t}\dd t +C\text{   при любых $r\geqslant r_0$
и $a>1$.} 
\end{equation*}
Поделив обе части этого неравенства на последний интеграл и переходя к пределу при  $r\to +\infty$
в обеих частях неравенства, приходим к соотношениям 
   \begin{align*}
\limsup_{r_0\leqslant r\to +\infty}\frac{1}{\displaystyle \int_r^{ar}\frac{f\bigl(t^2\bigr)}{t}\dd t}
\int_{r<t\leq ar} \frac{f\bigl(t^2\bigr)}{t}\dd Z^{\rad}(t)&\leqslant \limsup_{r_0\leqslant r\to +\infty}
\left( \frac{\prm(\partial \conv K)}{2\pi}+\frac{C}{\displaystyle \int_r^{ar}\frac{f\bigl(t^2\bigr)}{t}\dd t}\right) 
\\&= \frac{\prm(\partial \conv K)}{2\pi}+\frac{C}{\liminf\limits_{r_0\leqslant r\to +\infty}\displaystyle \int_r^{ar}\frac{f\bigl(t^2\bigr)}{t}\dd t}
\end{align*}
Применяя второй верхний предел по $1<a\to +\infty$ к крайним частям этого соотношения,   получаем 
\begin{equation*}
\limsup_{1<a\to +\infty}\limsup_{r_0\leqslant r\to +\infty}\frac{\displaystyle \int_{r<t\leq ar} \frac{f\bigl(t^2\bigr)}{t}\dd Z^{\rad}(t)}{\displaystyle \int_r^{ar}\frac{f\bigl(t^2\bigr)}{t}\dd t}
\leqslant \frac{\prm(\partial \conv K)}{2\pi}+\frac{C}{\liminf\limits_{1<a\to +\infty}\liminf\limits_{r_0\leqslant r\to +\infty}\displaystyle \int_r^{ar}\frac{f\bigl(t^2\bigr)}{t}\dd t},
\end{equation*}
где по условию \eqref{lni} знаменатель последней дроби даёт  $+\infty$, вследствие чего  
\begin{equation*}
\limsup_{1<a\to +\infty}\limsup_{r_0\leqslant r\to +\infty}\frac{\displaystyle \int_{r<t\leq ar} \frac{f\bigl(t^2\bigr)}{t}\dd Z^{\rad}(t)}{\displaystyle \int_r^{ar}\frac{f\bigl(t^2\bigr)}{t}\dd t}\leqslant \frac{\prm(\partial \conv K)}{2\pi}<\frac{\prm(\partial \conv S)}{2\pi}\overset{\eqref{prP}}{\leq} \frac{P}{2\pi}.
\end{equation*}
Здесь  строгое промежуточное неравенство $<$ порождает противоречие с условием 
\eqref{sZPRa}. Следовательно,   система $\Exp^Z$ полна в пространстве $\Hol (S)$.
Теорема доказана.
\end{proof}


\begin{thebibliography}{9}

\bibitem{BF}
 \textit{Боннезен~Т., Фенхель~В.\/} Теория выпуклых тел. "---  М.:  Фазис, 2002.

\bibitem{Bra05}
\textit{Брайчев~Г.~Г.} Введение в теорию роста выпуклых и целых функций. "--- М.: Прометей, 2005.



\bibitem{Bou65}
\textit{Бурбаки~Н.\/}
Функции действительного переменного. Элементарная теория. "--- М.: Наука, 1965.

\bibitem{KKh00}
 \textit{Каримов~М.~Р., Хабибуллин~Б.~Н.\/} 
Совпадение некоторых плотностей распределения мно\-ж\-еств и полнота систем целых функций~//
Труды международной конференции <<Ком\-п\-л\-е\-к\-с\-н\-ый анализ, дифференциальные уравнения и смежные вопросы>>. III. Анализ и дифференциальные уравнения.  "---  2000.  "--- {\sl III}.  "---  Уфа: Институт математики с вычислительным центром Уфимского научного центра Российской академии наук.  "--- С.~29--34.  "--- 
\href{https://www.researchgate.net/publication/291829910}{https://www.researchgate.net/publication/291829910}


\bibitem{Levin56} 
\textit{Левин~Б.~Я.\/} Распределение корней целых функций. "---   М.: Физматгиз. 1956.

\bibitem{Leicht} 
 \textit{Лейхтвейс~К.\/} Выпуклые множества. "---   М.: Наука, 1985.

\bibitem{SalKha20}
 \textit{Салимова~А.~Е., Хабибуллин~Б.~Н.\/}
Рост субгармонических функций вдоль прямой и распределение их мер Рисса~// Уфимск. матем. журн.  "---   2020.
"---  {\sl 12}, №~2. "---  С.~35--48. 


\bibitem{Kha91}
\textit{Хабибуллин~Б.~Н.} Множества единственности в пространствах целых функций одной переменной~// Изв. АН СССР. Сер. матем. "--- 1991. "--- {\sl 55}, №~5. "---  1101--1123.


\bibitem{Kha91_1}
\textit{Хабибуллин~Б.~Н.\/}  Теорема единственности для субгармонических функций конечного порядка~// Матем. сб. "--- 1991. "--- {\sl 182}, №~6. "--- 811--827.

\bibitem{Kha99}
\textit{Хабибуллин~Б.~Н.}  Полнота систем целых функций в пространствах голоморфных функций~//
 Матем. заметки. "--- 1999. "---  {\sl 66}, №~4. "---  603--616.

\bibitem{Khsur}
\textit{Хабибуллин~Б.~Н.\/} Полнота систем экспонент и множества единственности (издание четвёртое, дополненное).
 "---  Уфа: РИЦ БашГУ, 2012. "---  
\href{https://www.researchgate.net/publication/271841461}{https://www.researchgate.net/publication/271841461}

\bibitem{KhaShm19}
\textit{Хабибуллин~Б.~Н., Шмелёва А.~В.\/}
Выметание мер и субгармонических функций на систему лучей. I. Классический случай~//
 Алгебра и анализ.  "--- 2019. "--- {\sl 31}, №~1. "--- С.~156--210.

\bibitem{HK} 
\textit{Хейман~У., Кеннеди~П.\/} Субгармонические функции. "--- М.:  Мир, 1980.

\bibitem{Hor94}
\textit{H\"ormander~L.\/}  Notions of convexity. Progress in Math. {\bf 127}.  "--- Boston, MA:  Birkh\"auser, 1994.

\bibitem{Rans}
\textit{Ransford Th.\/} Potential Theory in the Complex Plane. "---  Cambridge: Cambridge University Press, 1995.

\end{thebibliography}
\end{document}